\theoremstyle{definition}
\newtheorem{thm}{Theorem}[section]
\newtheorem*{thm*}{Theorem}
\newtheorem{lemma}[thm]{Lemma}
\newtheorem{defn}[thm]{Definition}
\newtheorem{claim}[thm]{Claim}
\newtheorem{remark}[thm]{Remark}
\newtheorem{fact}[thm]{Fact}
\renewcommand{\subset}{\subseteq}
\newcommand\N{\mathbb{N}}
\newcommand{\set}[2]{ \left\{ #1 :\, #2 \right\} }
\newcommand{\seqq}[2]{ \left( #1 :\, #2\right) }
\newcommand{\seq}[1]{ \left( #1 \right) }
\title{Generic dichotomy for homomorphisms for $E_0^\mathbb{N}$}
\author{Assaf Shani}
\address{Department of Mathematics and Statistics, Concordia University University, Montreal, QC  H3G 1M8, Canada}
\email{assaf.shani@concordia.ca}
\urladdr{https://sites.google.com/view/assaf-shani/}
\date{\today}
\keywords{Borel reducibility,
Borel homomorphisms, Analytic equivalence relations.}
\subjclass[2010]{Primary: 03E15, 54H05.}
\thanks{Research partially supported by NSF grant DMS-2246746 and NSERC grant RGPIN-2024-05827.}
\begin{document}

\begin{abstract}
We prove the following dichotomy. Given an analytic equivalence relation $E$, either ${E_0^{\mathbb{N}}}\leq_B{E}$ or else any Borel homomorphism from $E_0^{\mathbb{N}}$ to $E$ is ``very far from a reduction'', specifically, it factors, on a comeager set, through the projection map $(2^{\mathbb{N}})^{\mathbb{N}}\to (2^{\mathbb{N}})^k$ for some $k\in\mathbb{N}$. As a corollary, we prove that $E_0^{\mathbb{N}}$ is a prime equivalence relation, answering a question on Clemens.
\end{abstract}
\maketitle

\section{Introduction}~\label{section: Introduction}
Let $E$ and $F$ be equivalence relations on Polish spaces $X$ and $Y$ respectively. A map $f\colon X\to Y$ is said to be a \textbf{reduction} of $E$ to $F$ if for any $x_1,x_2\in X$, 
\begin{equation*}
    x_1\mathrel{E}x_2\iff f(x_1)\mathrel{F}f(x_2).
\end{equation*}
We say that $E$ is \textbf{Borel reducible} to $F$, denoted ${E}\leq_B{F}$ if there is a Borel measurable function which is a reduction of $E$ to $F$. Borel reducibility is the most central concept in the study of equivalence relations on Polish spaces.

A map $f\colon X\to Y$ is a \textbf{homomorphism} from $E$ to $F$, if for any $x_1,x_2\in X$,
\begin{equation*}
x_1\mathrel{E}x_2 \implies f(x_1)\mathrel{F} f(x_2).    
\end{equation*}
We write $f\colon E\to_B F$ to denote that $f$ is a Borel measurable homomorphism from $E$ to $F$. To prove an irreducibility result, say that some $E$ is not Borel reducible to $F$, many times the argument takes the following outline: take an arbitrary Borel homomorphism from $E$ to $F$, and prove that it cannot be a reduction.

This paper concerns the equivalence relation $E_0^\N$, often denoted by $E_3$, which plays a central role in the theory of Borel equivalence relations. For more background the reader is referred to \cite{Hjorth-Kechris-New-Dichotomies, Hjorth-Kechris-recent-developments-2001, Kano08, Gao09}. For instance, the dichotomy proved in \cite{Hjorth-Kechris-recent-developments-2001} implies that $E_0^\N$ is an immediate successor of $E_0$ with respect to $\leq_B$.

Recall that $E_0$ on $2^\N$ is defined as the eventual equality relation between binary sequences. Equivalently, $E_0$ is the orbit equivalence relation induced by the action  $\bigoplus_{n\in\N}\mathbb{Z}_2 \curvearrowright 2^\N$. For a set $X$, define $E_0^X$ on $(2^\N)^X$ as the product equivalence relation. Equivalently, $E_0^X$ is the orbit equivalence relation induced by the point-wise action $(\bigoplus_{n\in\N}\mathbb{Z}_2)^X\curvearrowright (2^\N)^X$, where $(\bigoplus_{n\in\N}\mathbb{Z}_2)^X$ is the (full support) product of $X$ many copies of $\bigoplus_{n\in\N}\mathbb{Z}_2$.

For $k\in\N$, let $\pi_k\colon (2^\N)^\N \to (2^\N)^k$ be the projection map. Note that $\pi_k \colon E_0^\N \to_B E_0^k$ is Borel homomorphism. For each $k\in \N$, $E_0^{k}$ is Borel bireducible with $E_0$. So $\set{\pi_k}{k\in\N}$ is a family of Borel homomorphisms from $E_0^\N$ to a Borel equivalence relation which does not reduce $E_0^\N$. We prove that, generically, these are essentially all the Borel homomorphisms from $E_0^\N$ to analytic equivalence relations which do not reduce $E_0^\N$.

\begin{thm}\label{thm: main}
    Let $E$ be an analytic equivalence relation. Either
    \begin{itemize}
        \item $E^\N_0$ is Borel reducible to $E$, or
        \item for any Borel homomorphism $f\colon E_0^\N \to_B E$ there is $k\in\N$ so that $f$ factors through $\pi_k$ on a comeager set, that is, there is a Borel homomorphism $h\colon E_0^k \to_B E$, defined on a comeager set, so that for comeager many $x\in (2^\N)^\N$,
        \begin{equation*}
        h\circ \pi_k(x) \mathrel{E} f(x).
        \end{equation*}
    \end{itemize}
\end{thm}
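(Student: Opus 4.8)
The plan is to reduce the theorem to a statement about analytic equivalence relations containing $E_0^\N$, establish a first dichotomy by generic ergodicity, and treat the remaining case by a fusion construction; the hard part will be the fusion.

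\textbf{Step 1 (reduction).} Given a Borel homomorphism $f\colon E_0^\N\to_B E$, set $R=\set{(x,y)\in((2^\N)^\N)^2}{f(x)\mathrel E f(y)}$. Since $E$ is an analytic equivalence relation and $f$ a Borel homomorphism, $R$ is an analytic \emph{equivalence} relation on $(2^\N)^\N$ with $E_0^\N\subseteq R$, and $f$ is literally a Borel reduction of $R$ to $E$; hence if $E_0^\N\leq_B R$, then composing a reduction with $f$ gives $E_0^\N\leq_B E$. On the other hand, for $\bar x\in(2^\N)^k$ and $t\in(2^\N)^\N$ write $\bar x{}^\frown t\in(2^\N)^\N$ for the sequence whose first $k$ coordinates are those of $\bar x$ and whose remaining ones are those of $t$; then $h_{k,t}(\bar x):=f(\bar x{}^\frown t)$ is automatically a Borel homomorphism $E_0^k\to_B E$, since $\bar x\mathrel{E_0^k}\bar y$ forces $\bar x{}^\frown t\mathrel{E_0^\N}\bar y{}^\frown t$. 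Suppose now that
\begin{equation}\tag{$\star_k$}
R\cap\set{(x,y)}{\pi_k(x)=\pi_k(y)}\ \text{ is comeager in }\ \set{(x,y)}{\pi_k(x)=\pi_k(y)}.
\end{equation}
Fixing (by Kuratowski--Ulam) a $t$ whose section is comeager, one gets $x\mathrel R(\pi_k(x){}^\frown t)$ for comeager many $x$, i.e.\ $h_{k,t}(\pi_k(x))\mathrel E f(x)$, so $f$ factors through $\pi_k$ on a comeager set. Thus it suffices to prove: \emph{every analytic equivalence relation $R$ on $(2^\N)^\N$ with $E_0^\N\subseteq R$ either satisfies $E_0^\N\leq_B R$, or satisfies $(\star_k)$ for some $k$.}

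\textbf{Step 2 (a category dichotomy).} Every $E_0^\N$-orbit is dense in $(2^\N)^\N$ (inside a given basic box, replace the finitely many restrained coordinates by tail-equivalent ones lying in the box), so the pointwise action of $(\bigoplus_{n\in\N}\mathbb{Z}_2)^\N$ is generically ergodic and every $E_0^\N$-invariant set with the Baire property is meager or comeager. Each $R$-class is $E_0^\N$-invariant and analytic, hence meager or comeager; two comeager classes would meet, so either some $R$-class is comeager --- which is exactly $(\star_0)$ --- or every $R$-class is meager and (Kuratowski--Ulam, since each vertical section $R_x$ is then meager) $R$ is meager in $((2^\N)^\N)^2$. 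Assume the latter and, aiming at $E_0^\N\leq_B R$, assume also that $(\star_j)$ fails for every $j$. The slice $\set{(x,y)}{\pi_j(x)=\pi_j(y)}$ is Polish, and $R$ meets it in an analytic set invariant under the residual $E_0$-actions on the coordinates $\geq j$ (and under the diagonal $E_0$-action on the coordinates $<j$); by the resulting $0$-$1$ laws the failure of $(\star_j)$ upgrades to: $R$ is \emph{meager} in that slice --- i.e.\ a generic alteration of the coordinates $\geq j$ of a point takes it outside its own $R$-class. This is the ``room'' we need: whatever finite part of a construction has fixed the coordinates $<j$, the coordinates $\geq j$ can then be steered into an $R$-inequivalent box.

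\textbf{Step 3 (building the reduction).} I would now build a continuous injection $\phi\colon(2^\N)^\N\to(2^\N)^\N$ that is a reduction of $E_0^\N$ to $R$, of the form $\phi(z)=(\psi_n(z_n))_n$ with each $\psi_n$ a block-coding map; then $\psi_n$ reduces $E_0$ to $E_0$, $\phi$ is a homomorphism $E_0^\N\to_B E_0^\N\subseteq R$, and in particular $z\mathrel{E_0^\N}w$ implies $\phi(z)\mathrel R\phi(w)$. The maps $\psi_n$ are produced along a Cantor scheme: whenever two branches become committed to differ beyond $E_0$ at some coordinate $n_0$ (so their $\phi$-images are destined to be $E_0^\N$-inequivalent), the failure of $(\star_{n_0})$ from Step 2 lets one extend the scheme so that the two corresponding boxes become $R$-inequivalent, after which they remain so. In favourable configurations this degenerates to an explicit relabelling --- if finitely many coordinates are ``$R$-irrelevant'' one fixes them to dummies and shifts the rest, and if $R$ only remembers the coordinates up to finite error one spreads each input coordinate over infinitely many output coordinates --- but in general a genuine Cantor-scheme fusion is required, and $\phi$ is then the desired reduction.

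\textbf{Main obstacle, and the corollary.} The hard part will be exactly Step 3. Since $E_0^\N$-inequivalence is only $\Sigma^0_3$, the requirement that $\phi$ send $E_0^\N$-inequivalent points to $R$-inequivalent ones cannot be met by finite commitments directly: one must organise the scheme so that branches \emph{destined} to be inequivalent get separated into $R$-inequivalent boxes while branches that remain equivalent are never separated, all the while defeating the merely analytic --- not Borel --- relation $R$ (working with a tree representation of $R$, or passing via Shoenfield absoluteness to an extension in which $R$ is Borel), in the spirit of the Harrington--Kechris--Louveau construction of $E_0\leq_B F$ for non-smooth $F$ and of the Hjorth--Kechris dichotomies; Steps 1--2 and the Kuratowski--Ulam bookkeeping are soft by comparison. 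Finally, the corollary that $E_0^\N$ is prime follows by applying Theorem~\ref{thm: main} to the two coordinate maps of a putative Borel reduction $E_0^\N\leq_B F_0\times F_1$: if neither $F_i$ reduced $E_0^\N$, both coordinate maps would factor through some $\pi_{k_i}$ on a comeager set, so the reduction itself would factor through $\pi_{\max(k_0,k_1)}$ on a comeager set and hence, by genericity, identify $E_0^\N$-inequivalent points differing only beyond coordinate $\max(k_0,k_1)$ --- contradicting that it is a reduction.
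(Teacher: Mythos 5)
Your Steps 1--2 are sound and run parallel to the paper: the pullback $R$ of $E$ under $f$ is exactly how the paper reformulates the theorem (an analytic equivalence relation on $(2^\N)^\N$ extending $E_0^\N$ either reduces $E_0^\N$ or extends $E_0^k$ generically for some $k$), and your $0$--$1$ law argument for upgrading the failure of $(\star_k)$ to ``$R$ is meager in the slice'' is the paper's Lemma~\ref{lemma: generics Einequiv}. The Kuratowski--Ulam extraction of $h_{k,t}$ is a fine (even slightly cleaner) way to get the factoring map. But the heart of the theorem is your Step 3, and there you have a genuine gap, which you partly acknowledge yourself. The pivotal sentence --- that when two branches become committed to differ beyond $E_0$ at coordinate $n_0$, the failure of $(\star_{n_0})$ ``lets one extend the scheme so that the two corresponding boxes become $R$-inequivalent, after which they remain so'' --- is unjustified and, as stated, false: the complement of $R$ in the slice is merely comeager, not open, and $R$ is analytic, so no finite commitment can secure $R$-inequivalence of two boxes or preserve it under further extension; and since $E_0^\N$-inequivalence is $\Sigma^0_3$ one cannot enumerate in advance which pairs of branches need separating. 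Deferring this to ``a tree representation of $R$ or Shoenfield absoluteness in the spirit of Harrington--Kechris--Louveau'' leaves the engine of the proof missing, and it is far from clear that such a route exists: the known $E_0^\N$ dichotomy of Hjorth--Kechris needs much more than the HKL scheme, and does not give this statement.

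The paper's key idea, absent from your outline, is that one never has to defeat the analytic relation $R$ pointwise. Since $R$ contains $E_0^\N$, it is invariant under the finite-support subgroup of $(\bigoplus_{n\in\N}\mathbb{Z}_2)^\N$, so it suffices to arrange that for $x\restriction k=y\restriction k$, $x(k)\not\mathrel{E_0}y(k)$, the image pair $f(x)=(a,b)$, $f(y)=(a,c)$ is jointly generic with respect to countably many \emph{dense open} sets $D^l_{k,m}$ (coding: every finite-support translate of $(a,b,c)$ has comeager sections of the comeager witness set $C_k$); the Vaught-transform Lemma~\ref{lemma: permutations} then produces a finite-support $g$ with $g\cdot(a,b,c)\in C_k$, and invariance of $E$ pulls $(a,b)\not\mathrel{E}(a,c)$ back from the translate. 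These open requirements can be met by finite extensions at every stage $r$ with $x(k)(r)\neq y(k)(r)$ --- which occurs infinitely often exactly when needed --- and this forces $f(x)(n)$ to depend on $x\restriction(n+1)$, not on $x(n)$ alone. Indeed your proposed coordinatewise form $\phi(z)=(\psi_n(z_n))_n$ cannot work in general: if $R$ is the pullback of $E_0^{\N\setminus\{k_0\}}$ under deleting coordinate $k_0$, then no $(\star_k)$ holds, yet no coordinatewise $\phi$ reduces $E_0^\N$ to $R$, since input coordinate $k_0$ affects only output coordinate $k_0$, which $R$ ignores; likewise, for inputs agreeing off coordinate $k$ the images of a coordinatewise map agree off coordinate $k$, and such pairs form a nowhere dense subset of the slice, about which the comeagerness of the complement of $R$ says nothing. (As a side remark, your closing derivation of primeness via a product $F_0\times F_1$ does not match Clemens's notion, which concerns a homomorphism having a fiber on which $E_0^\N$ retains its complexity; the paper gets this from the factoring statement by Kuratowski--Ulam together with $E_0^\N\leq_B E_0^\N\restriction C$ for comeager $C$.)
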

\begin{figure}[H]\label{figure: main thm factoring}
    \centering
\begin{tikzpicture}[
node/.style={}]
\node[node]      (E3)   at (0,0)        {$E_0^\N$};
\node[node]     (E0k) [below=of E3]
{$E_0^k$};
\draw[->]  (E3.south) --node[anchor=east] {$\pi_k$} (E0k.north) ;
\node[node]     (fplus) [right=of E0k]
{$E$};
\draw[->] (E3.south east) -- node[anchor=south west] {$f$} (fplus.north west)   ;
\draw[->,dashed]    (E0k.east) --node[anchor=south] {$h$} (fplus.west) ;
\end{tikzpicture}

    \caption{$(\forall f\colon E_0^\N\to_B E )(\exists k\in\N\, \exists h\colon E_0^k\to_B E)$} 
    %\label{fig:enter-label}
\end{figure}

\subsection{Primeness for equivalence relations}

From the point of view of the study of Borel reducibility as the study of definable cardinality of quotients of Polish spaces, a Borel homomorphism corresponds to a definable map between two such quotients, and a Borel reduction corresponds to an injective map.

\begin{defn}[Clemens~\cite{Clemens-primeness-2020}] Let $E$ and $F$ be Borel equivalence relations on Polish spaces $X$ and $Y$ respectively. Say that {\bf $E$ is prime to $F$} if for any Borel homomorphism $f\colon E\to_B F$, $E$ retains its complexity on a fiber, that is, there is $y\in Y$ so that $E$ is Borel reducible to $E\restriction \set{x\in X}{f(x)\mathrel{F}y}$.
\end{defn}
Primeness is a strong form of Borel-irreducibility, which holds between many pairs of benchmark equivalence relations (see~\cite[Theorem~1]{Clemens-primeness-2020}).

In the classical context of cardinality, primeness corresponds to a pigeonhole principle: any function $f\colon A\to B$ has a fiber of cardinality $|A|$. This is true if and only if the cardinality $|B|$ is strictly smaller than the cofinality of $|A|$. Recall that the cardinality $|A|$ is regular if it is equal to its cofinality. This is true if and only if for any $|B|<|A|$, any function from $A$ to $B$ has a fiber of size $|A|$.

Following this analogy Clemens defined \textbf{regular} equivalence relation as follows. In the context of definable cardinality, when not every two sizes are comparable, the stronger notion of a \textbf{prime} equivalence relation is also of interest.

\begin{defn}[Clemens~\cite{Clemens-primeness-2020}]\label{Definition : prime} 
Let $E$ be a Borel equivalence relation.
\begin{itemize}
    \item $E$ is {\bf prime} if for any Borel equivalence relation $F$, either $E\leq_B F$ or $E$ is prime to $F$.
    \item $E$ is {\bf regular} if for any Borel equivalence relation $F$, if $F<_BE$ then $E$ is prime to $F$.
\end{itemize}
\end{defn}
For example, it follows from the celebrated $E_0$-dichotomy~\cite{HKL90} that $E_0$ is prime. The $E_0^\N$ dichotomy proved by Hjorth and Kechris~\cite{Hjorth-Kechris-recent-developments-2001} implies that $E_0^\N$ is regular. (See \cite[Section~4]{Clemens-primeness-2020}.) 
Clemens~\cite[Question~4.2]{Clemens-primeness-2020} asked if $E_0^\N$ is in fact prime. 
\begin{thm}\label{thm: E3 prime}
    For any analytic equivalence relation $E$, either ${E_0^\N}\leq_B{E}$ or $E_0^\N$ is prime to $E$. In particular, $E_0^\N$ is prime.
\end{thm}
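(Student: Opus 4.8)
The plan is to deduce Theorem~\ref{thm: E3 prime} from Theorem~\ref{thm: main} together with one standard fact about $E_0^\N$. Since every Borel equivalence relation is analytic, the second sentence of the theorem follows from the first, so I fix an analytic equivalence relation $E$ on a Polish space $Y$, assume $E_0^\N\not\leq_B E$, fix an arbitrary Borel homomorphism $f\colon E_0^\N\to_B E$, and aim to produce a single $y\in Y$ with $E_0^\N\leq_B E_0^\N\restriction\set{x\in(2^\N)^\N}{f(x)\mathrel{E}y}$. Applying Theorem~\ref{thm: main} to $E$ and $f$ (here is where $E_0^\N\not\leq_B E$ is used) gives $k\in\N$, a comeager set $C\subseteq(2^\N)^k$, and a Borel homomorphism $h\colon E_0^k\restriction C\to_B E$ such that the set $B:=\set{x\in\pi_k^{-1}(C)}{h(\pi_k(x))\mathrel{E}f(x)}$ is comeager in $(2^\N)^\N$; note $B$ is analytic, hence has the Baire property.

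Next I would locate $y$ by a Baire-category argument. Write a point of $(2^\N)^\N$ as $(z,w)$ with $z\in(2^\N)^k$, $w\in(2^\N)^{\N\setminus k}$, and put $B_z:=\set{w}{(z,w)\in B}$. Since $B$ is comeager with the Baire property, Kuratowski--Ulam gives that $D:=\set{z\in(2^\N)^k}{B_z\text{ is comeager}}$ is comeager; as $C$ is also comeager, I can choose $z_0\in C\cap D$ and set $y:=h(z_0)$. For any $w\in B_{z_0}$ we have $(z_0,w)\in B$, and since $\pi_k(z_0,w)=z_0$ this says exactly $h(z_0)\mathrel{E}f(z_0,w)$, i.e.\ $f(z_0,w)\mathrel{E}y$; hence the slice $\{z_0\}\times B_{z_0}$ is contained in $A:=\set{x}{f(x)\mathrel{E}y}$.

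Finally I would observe that $E_0^\N\leq_B E_0^\N\restriction A$. Since the inclusion $\{z_0\}\times B_{z_0}\hookrightarrow A$ is a reduction of $E_0^\N\restriction(\{z_0\}\times B_{z_0})$ into $E_0^\N\restriction A$, it suffices to reduce $E_0^\N$ to $E_0^\N\restriction(\{z_0\}\times B_{z_0})$; and via the homeomorphism $w\mapsto(z_0,w)$ this amounts to reducing $E_0^\N$ to $E_0^{\N\setminus k}\restriction B_{z_0}$, i.e.\ (after reindexing $\N\setminus k$ as $\N$) to $E_0^\N$ restricted to a comeager set. The remaining ingredient is therefore the standard fact that $E_0^\N$ is Borel reducible to its restriction to any comeager subset of $(2^\N)^\N$. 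I would prove this by a fusion argument: fixing dense open sets $U_m$ with $\bigcap_m U_m$ inside the given comeager set, split each coordinate of the range into consecutive finite blocks and build, block by block and coordinate by coordinate (dovetailing the requirements $U_m$), the two admissible values of each block so that every finite pattern of already-decided blocks forces membership in $\bigcap_{m<N}U_m$ at stage $N$; the resulting continuous injection is a reduction of $E_0^\N$ to $E_0^\N$ landing inside the comeager set. I expect this fusion bookkeeping to be the only step requiring genuine care in the write-up --- all the conceptual content sits in Theorem~\ref{thm: main} --- and it is routine enough that it could instead be dispatched by citation.
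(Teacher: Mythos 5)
Your proposal is correct and follows essentially the same route as the paper: apply Theorem~\ref{thm: main}, use Kuratowski--Ulam to find a slice $\{z_0\}\times B_{z_0}$ contained in a single $E$-fiber of $f$, and then invoke the fact that $E_0^\N$ Borel reduces to its restriction to any comeager subset of $(2^\N)^\N$. The only divergence is in how that last fact is handled: you sketch a direct block-by-block fusion construction (which does work, with the equal-length/distinct-block and dovetailing bookkeeping you indicate), whereas the paper obtains it in Section~\ref{subsec: E3 retains complexity} from its permutation lemma and the main construction, or alternatively by citing the Hjorth--Kechris $E_0^\N$-dichotomy.
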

\begin{proof}
Fix an analytic equivalence relation $E$ which does not Borel reduce $E_0^\N$, and fix a Borel homomorphism $f\colon E_0^\N\to E$. By Theorem~\ref{thm: main}, there is $k\in\N$ and a comeager set $C\subset (2^\N)^\N$ so that $f$ factors through $\pi_k$ on $C$. We identify $(2^\N)^\N$ with $(2^\N)^k \times (2^\N)^{\N\setminus k}$. By the Kuratowski-Ulam theorem~\cite[Theorem 8.41 (iii)]{Kechris-DST-1995} there is $y\in (2^\N)^k$ so that $C_y = \set{z\in (2^\N)^{\N\setminus k}}{(y,z)\in C}$ is comeager in $(2^\N)^{\N\setminus k}$. Note that $\{y\}\times C_y$ is contained in a fiber of $f$. We conclude the proof by showing that ${E_0^\N}\leq_B E_0^\N\restriction \{y\}\times C_y$.

Since $E_0^\N \restriction \{y\}\times (2^\N)^{\N\setminus k}$ is isomorphic, via a homeomorphism, to $E_0^\N$, it suffices to show that ${E_0^\N} \leq_B {E_0^\N\restriction C}$ for any comeager set $C$ in the domain of $E_0^\N$. We will give a proof of this fact in Section~\ref{subsec: E3 retains complexity} below. Here we mention that it follows from the $E_0^\N$ dichotomy of Hjorth and Kechris~\cite{Hjorth-Kechris-recent-developments-2001}, as $E_0^\N \restriction C$ is not Borel reducible to $E_0$, for any comeager set $C$. 
\end{proof}

\section{A generic dichotomy for homomorphisms for $E_0$}
In this section we note that the primeness dichotomy of $E_0$ is also true for all analytic equivalence relations. This follows from the following generic dichotomy for Borel homomorphisms.
\begin{thm}\label{thm: homom dichotomy E0}
Let $E$ be an analytic equivalence relation. Then either
    \begin{itemize}
        \item ${E_0}\leq_B{E}$ or 
        \item any Borel homomorphism $f\colon E_0 \to_B E$ sends a comeager subset of $2^\N$ into a single $E$-class.
    \end{itemize}
\end{thm}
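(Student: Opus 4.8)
The plan is to use a Baire-category / genericity argument centered on the canonical action of $\Gamma := \bigoplus_{n\in\N}\mathbb{Z}_2$ on $2^\N$, combined with the fact that $E$ is analytic and hence admits a reasonably absolute description. Assume $f\colon E_0\to_B E$ is a Borel homomorphism. Consider the (continuous) action of $\Gamma$ on $2^\N$; since $f$ is a homomorphism, for each $\gamma\in\Gamma$ the set $\{x : f(\gamma\cdot x)\mathrel{E} f(x)\}$ is comeager (in fact all of $2^\N$), so by a Kuratowski--Ulam / Pettis-style argument there is a comeager $\Gamma$-invariant Borel set $C\subseteq 2^\N$ on which $f$ is ``coherent'' in the sense that $x\mathrel{E_0}x'$ implies $f(x)\mathrel{E}f(x')$ with witnesses chosen in a Borel way. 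The key dichotomy then comes from asking whether $f$ is, on a comeager set, $E$-constant. If yes, we are in the second case and done. If not, then there is a non-meager Borel set $B$ and a decomposition of $B$ into two non-meager pieces $B_0, B_1$ such that $f[B_0]$ and $f[B_1]$ are not $E$-related; the goal is to bootstrap this single ``splitting'' into a Borel reduction of $E_0$ into $E$.

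The heart of the argument is the bootstrapping step, and I expect this to be the main obstacle. The idea is to build, by recursion on finite binary strings $s\in 2^{<\N}$, a Cantor scheme of basic open sets (or non-meager Borel sets) $U_s\subseteq 2^\N$ together with group elements, arranged so that: (i) the $U_s$ refine as $s$ extends, with $\bigcap_n U_{x\restriction n}$ a singleton $\{z_x\}$ for each $x\in 2^\N$; (ii) the map $x\mapsto z_x$ is continuous and injective; (iii) if $x\mathrel{E_0}x'$ then $z_x\mathrel{E_0}z_x'$, so that $x\mapsto f(z_x)$ is a homomorphism of $E_0$; and (iv) at each splitting level we exploit the non-meager splitting of $f$ to guarantee that $x\not\mathrel{E_0}x'$ forces $f(z_x)\mathrel{\not E}f(z_x')$. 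Point (iv) is delicate: a single splitting only separates two classes, so to separate the continuum-many $E_0$-classes one must propagate the splitting along all the coordinates of the $E_0$-action simultaneously, i.e. use that $E_0$ is generated by infinitely many independent flips and that each flip, generically, must move the $f$-value off its $E$-class (otherwise $f$ would already be $E$-constant on a comeager set by a $0$-$1$ law type argument). Formalizing ``each generic flip moves the value'' requires a Kuratowski--Ulam argument on $2^\N\times\Gamma$ and a use of the analyticity of $E$ (so that $\{(x,\gamma): f(\gamma\cdot x)\mathrel{\not E}f(x)\}$ is analytic, hence has the Baire property, hence is either meager or comeager on a box).

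Concretely, I would first prove a lemma: if $f\colon E_0\to_B E$ is a Borel homomorphism that is \emph{not} $E$-constant on any comeager set, then there is a comeager set on which, for every $n$, flipping the $n$-th coordinate (generically) changes the $E$-class of the value; equivalently the ``stabilizer-type'' set $\{x : f(\sigma_n\cdot x)\mathrel{E}f(x)\}$ is meager for each $n$ (here $\sigma_n$ is the $n$-th generator of $\Gamma$). Granting this, a straightforward fusion/Cantor-scheme construction produces a continuous injection $g\colon 2^\N\to 2^\N$ with $x\mathrel{E_0}x'\iff g(x)\mathrel{E_0}g(x')$ and such that $f\circ g$ is a reduction of $E_0$ to $E$, witnessing $E_0\leq_B E$. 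The contrapositive gives the theorem: if $E_0\not\leq_B E$, then the lemma's hypothesis fails, so $f$ \emph{is} $E$-constant on a comeager set.

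The main obstacle is the lemma, specifically ensuring that the failure of comeager $E$-constancy propagates to \emph{every} coordinate flip being generically nontrivial rather than just \emph{some} finite set of flips; this is where one needs a genuine $0$-$1$ law for the tail behavior of $f$ under the $E_0$-action, using that $E_0$ is hyperfinite with an ergodic-like generic behaviour and that $E$ being analytic keeps the relevant sets Baire-measurable. A secondary technical point is maintaining $E$-separation \emph{across} the fusion (not just at each finite stage), for which one tracks, along each branch, a fixed finite ``approximation'' to the $E$-class and uses analyticity to ensure these approximations genuinely distinguish classes in the limit.
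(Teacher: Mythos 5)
There is a genuine error at the heart of your plan: the lemma you propose (``if $f$ is not generically $E$-constant then $\{x : f(\sigma_n\cdot x)\mathrel{E}f(x)\}$ is meager for each $n$'') is false, in fact vacuously so. Since $\sigma_n\cdot x\mathrel{E_0}x$ for every $x$ and $f$ is a homomorphism from $E_0$ to $E$, we have $f(\sigma_n\cdot x)\mathrel{E}f(x)$ for \emph{all} $x$; the set in question is all of $2^\N$ no matter what $f$ does. Coordinate flips can never move the value of a homomorphism off its $E$-class, so ``propagating the splitting along the generators of the $E_0$-action'' cannot be the mechanism for step (iv): the $E$-separation in the Cantor scheme must be arranged between branches that are \emph{not} $E_0$-equivalent, not along the flips that generate $E_0$.

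The missing idea is to work with the pullback $E^\ast$ on $2^\N$ defined by $x\mathrel{E^\ast}y\iff f(x)\mathrel{E}f(y)$, which is analytic and extends $E_0$ (this is the reformulation in Theorem~\ref{thm: E0 dichotomy of homoms}). Each $E^\ast$-class is $E_0$-invariant and has the Baire property, so by generic ergodicity of $E_0$ a non-meager class is comeager, i.e.\ $f$ is generically $E$-constant. Thus if $f$ is not generically constant, every $E^\ast$-class is meager and, by Kuratowski--Ulam, $C=2^\N\times 2^\N\setminus E^\ast$ is comeager in the square. This single comeager set is what drives the fusion: as in Fact~\ref{fact: E0 respecting splitting}, one builds a Borel homomorphism $g\colon E_0\to_B E_0$ such that $x\not\mathrel{E_0}y$ implies $(g(x),g(y))\in C$, by extending at each stage the finitely many current finite approximations, pairwise, into the dense open sets whose intersection is $C$; then $f\circ g$ is a reduction of $E_0$ to $E$. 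This also resolves your secondary worry about preserving separation ``across the fusion'': separation in the limit is guaranteed because inequivalent pairs of branches are forced into every dense open set defining $C$, so no auxiliary ``finite approximations to the $E$-class'' or use of analyticity beyond the Baire property is needed. Your overall architecture (a continuous $E_0$-respecting embedding $g$ with $f\circ g$ a reduction) is the right one, but the pivotal genericity statement has to be about pairs of $E_0$-inequivalent points avoiding $E^\ast$, not about single flips changing the class.
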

\begin{remark}
The theorem follows from the Ulm-invariants dichotomy of Hjorth and Kechrish~\cite{Hjorth-Kechris-Ulm-1995}, since if $E$ is Ulm classifiable then the second bullet holds. %\todo{reference for this?}
The dichotomy in \cite{Hjorth-Kechris-Ulm-1995} is proved assuming the existence of sharps for reals.

We include a direct proof below (not using any set theoretic assumptions). The dichotomy for homomorphisms is much easier to prove than the $E_0$-dichotomies. In fact, these ideas can be found as part of the proof of any $E_0$-dichotomy. 

This is one of the motivations to study such generic dichotomies for homomorphisms. While, beyond $E_0$, there are no more dichotomies quite like the $E_0$-dichotomy (see \cite[Theorem~5.1]{Kechris-Louveau-1997}), a generic analysis of all Borel homomorphisms is one aspect of the $E_0$-dichotomy which we can be generalized beyond $E_0$.
\end{remark}

Given two equivalence relations $F$ and $E$ on the same domain, say that $E$ \textbf{extends} $F$ if $F\subset E$. Let $F$ and $E$ be equivalence relations on domains $X$ and $Y$ respectively. Given a function $f\colon X\to Y$, define the pullback of $E$ as the equivalence relation $E^\ast$ on $X$ defined by $x\mathrel{E^\ast} y\iff f(x) \mathrel{E} f(y)$. Note that $f$ is a Borel homomorphism from $E$ to $F$ if and only if $E^\ast$ extends $E$. Furthermore, $f$ sends a comeager subset of $X$ to a single $E$ class if and only if $E^\ast$ has a comeager class. Theorem~\ref{thm: homom dichotomy E0} is equivalent to the following.

\begin{thm}\label{thm: E0 dichotomy of homoms}
    Let $E$ be an analytic equivalence relation on $2^\N$ which extends $E_0$. Then either
    \begin{itemize}
        \item ${E_0}\leq_B{E}$ or 
        \item $E$ has a comeager class.
    \end{itemize}
\end{thm}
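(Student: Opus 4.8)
The plan is to run a Baire-category argument on the domain $2^{\N}$, using genericity to force either a reduction or a comeager class. Fix an analytic $E\supseteq E_0$ on $2^{\N}$, and suppose $E$ does not have a comeager class; I want to build a continuous (or Borel) reduction of $E_0$ into $E$. The natural strategy is to construct a Cantor scheme: a family of basic clopen sets $\{U_s : s\in 2^{<\N}\}$ with $U_\emptyset = 2^{\N}$, $U_{s0},U_{s1}\subseteq U_s$ disjoint and of vanishing diameter, together with ``difference patterns'' so that the induced map $g\colon 2^{\N}\to 2^{\N}$ sending $\alpha$ to the unique point of $\bigcap_n U_{\alpha\restriction n}$ is a reduction. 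To make $g$ a homomorphism is automatic if we arrange $E_0$-relatedness along each branch; the real content is making it a \emph{reduction}, i.e. ensuring $g(\alpha) \mathbin{\not\!E} g(\beta)$ whenever $\neg(\alpha \mathrel{E_0}\beta)$.

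The key mechanism is that, since $E$ extends $E_0$, $E$ is invariant under the group $G = \bigoplus_{n}\mathbb{Z}_2$, and the hypothesis ``no comeager $E$-class'' should be leveraged via a category/ergodicity dichotomy: for the $G$-action on $2^{\N}$, genericity of the action means that any $E$-class is either comeager or meager; combined with a Mycielski-style argument one extracts, for suitable finite conditions $s,t$ with the same length, a ``splitting'' $E$-inequivalence that can be achieved on a comeager set of extensions. Concretely, at each stage of the construction I would have finitely many clopen conditions indexed by $2^{\le n}$; for each pair of incomparable-at-some-coordinate indices I must refine the conditions so that the eventual branch-points land in $E$-inequivalent regions. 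The tool that makes this possible is that the relation ``$\exists$ a comeager set of pairs $(x,y)$ with $x\in U$, $y\in V$, $\neg(x\mathrel{E} y)$'' holds for any two basic clopen $U,V$ — this is exactly where ``no comeager class'' is used (if it failed for some $U,V$, one would propagate a comeager class using the $E_0$-translations), and it is analytic hence one can find witnesses by effective descriptive set theory / Jankov–von Neumann uniformization, or simply by a Baire-category / Kuratowski–Ulam computation on the product.

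I would carry out the steps in this order: (1) Reformulate: assume $E$ has no comeager class; show that for any nonempty basic clopen $U,V\subseteq 2^{\N}$ the set $\{(x,y)\in U\times V : \neg(x\mathrel{E}y)\}$ is comeager in $U\times V$ — this is the crux. (2) Using (1) and Kuratowski–Ulam repeatedly, build the Cantor scheme $\{U_s\}$ recursively so that at each level the branch representatives are pairwise $E$-inequivalent across the ``$E_0$-apart'' pairs while staying $E_0$-equivalent along branches by controlling tails; a bookkeeping argument handles the countably many pairs $(\alpha,\beta)$ with $\neg(\alpha\mathrel{E_0}\beta)$, using that $\neg(\alpha \mathrel{E_0}\beta)$ is witnessed by one coordinate differing infinitely often, so infinitely many levels are ``available'' to separate them. (3) Let $g$ be the branch map; verify $g$ is continuous, $g$ is a homomorphism of $E_0$ into $E$ (by the tail-control), and $g$ is a reduction (by the separation in step 2, invoking that $E\supseteq E_0$ so the ambiguity from moving within an $E_0$-class is absorbed). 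Then $E_0 \le_B E$, completing the dichotomy.

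The main obstacle I expect is step (1) — precisely, turning ``no comeager $E$-class'' into the strong product-comeagerness statement, uniformly over all pairs of basic clopen sets. The subtlety is that $E$ is only analytic, not Borel, so ``$E$-class'' and ``$\neg(x \mathrel E y)$'' are not both nicely definable; one has to be careful that the co-analytic set $\{(x,y):\neg(x\mathrel E y)\}$ still has the Baire property (it does, since analytic sets do) so that Kuratowski–Ulam applies, and that the contrapositive argument — if $\{(x,y)\in U\times V:\neg(x\mathrel E y)\}$ is \emph{not} comeager then some $E$-class is comeager — genuinely goes through, which uses the $E_0$-invariance of $E$ together with the fact that $E_0$ acts ``generically ergodically'' on $2^{\N}$ (every $E_0$-invariant Baire-measurable set is comeager or meager). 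Once that is in hand, the Cantor-scheme construction is routine bookkeeping of the kind that appears inside standard proofs of the $E_0$-dichotomy.
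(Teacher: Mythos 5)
Your proposal is correct and takes essentially the same route as the paper: the paper also turns ``no comeager class'' into comeagerness of $2^\N\times 2^\N\setminus E$ (using that $E$-classes are $E_0$-invariant analytic sets, generic ergodicity of the $E_0$-action, and Kuratowski--Ulam), and then applies the standard Mycielski-style block construction (its Fact~2.3) giving a Borel homomorphism $f\colon E_0\to_B E_0$ that sends every $E_0$-inequivalent pair into that comeager set, which is exactly your Cantor-scheme step. The only slip is the phrase ``countably many pairs $(\alpha,\beta)$ with $\neg(\alpha\mathrel{E_0}\beta)$'' --- there are continuum many such pairs, but as your next clause indicates, the bookkeeping is really over the countably many dense open sets met level by level, so the argument is unaffected.
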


\begin{fact}\label{fact: E0 respecting splitting}
    Let $C\subset 2^\N \times 2^\N$ be comeager. There is a Borel homomorphism $f\colon E_0 \to_B E_0$  so that if $x\not\mathrel{E_0}y$ then $(f(x),f(y))\in C$.
\end{fact}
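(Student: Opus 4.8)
The plan is to build a continuous injection $f\colon 2^\N\to 2^\N$ by a Cantor scheme, equipped with enough extra structure to force the homomorphism property. Write $C\supseteq\bigcap_n W_n$ with $(W_n)$ a decreasing sequence of dense open subsets of $2^\N\times 2^\N$, and for finite binary strings $s,t$ of equal length put $\delta(s,t)=|\{i : s(i)\neq t(i)\}|$. I will recursively construct pairwise disjoint nonempty clopen sets $U_s$ ($s\in 2^{<\N}$) with $U_{s0},U_{s1}\subseteq U_s$ and $\operatorname{diam}(U_s)\to 0$ — so that $\{f(x)\}=\bigcap_n U_{x\restriction n}$ is a continuous injection — while maintaining: (a) a \emph{genericity} invariant, $U_s\times U_t\subseteq W_{\delta(s,t)}$ whenever $\delta(s,t)\ge 1$; and (b) a \emph{homogeneity} invariant: all $U_s$ with $|s|=k$ have equal measure, and there is a coherent cocycle $\{h_{s,t}\colon U_s\to U_t : |s|=|t|\}$ of measure preserving $E_0$-inner homeomorphisms (meaning $h_{s,t}(w)\mathrel{E_0}w$ for all $w$) with $h_{s,t}(U_{s\epsilon})=U_{t\epsilon}$ for $\epsilon\in\{0,1\}$, where ``coherent'' means the level-$(k+1)$ maps restrict to / are built out of the level-$k$ maps and the new bit.

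Granting such a scheme, $f$ is as required. If $x\mathrel{E_0}y$, write $x=u^\frown z$, $y=v^\frown z$ with $|u|=|v|$; coherence gives $f(y)=h_{u,v}(f(x))$, and since $h_{u,v}$ is $E_0$-inner, $f(x)\mathrel{E_0}f(y)$, so $f$ is a homomorphism from $E_0$ to $E_0$. If $x\not\mathrel{E_0}y$, then $\delta(x\restriction n,y\restriction n)\to\infty$, so for each $j$ there is $n$ with $(f(x),f(y))\in U_{x\restriction n}\times U_{y\restriction n}\subseteq W_{\delta(x\restriction n,y\restriction n)}\subseteq W_j$; hence $(f(x),f(y))\in\bigcap_j W_j\subseteq C$.

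The recursion step, from level $k$ to level $k+1$, is the heart of the matter. Invariant (b) forces that once $U_{s_00},U_{s_01}$ are chosen for the single string $s_0=0^k$, every other $U_{t\epsilon}$ must equal $h_{s_0,t}(U_{s_0\epsilon})$, and the extension of the cocycle is then determined up to a single $E_0$-inner homeomorphism $U_{s_00}\to U_{s_01}$ — which exists as soon as the two clopen sets have equal measure (relabel a common-length initial segment; such relabelings are measure preserving $E_0$-inner maps, and compositions and $h_{s_0,\cdot}$-conjugates of such are again of this kind, which is what keeps the cocycle consistent). So it remains to choose disjoint basic clopen $A_0:=U_{s_00},A_1:=U_{s_01}\subseteq U_{s_0}$ of equal, small measure meeting the finitely many new instances of (a) — namely $h_{s_0,s'}(A_\epsilon)\times h_{s_0,t'}(A_{1-\epsilon})\subseteq W_{\delta(s',t')+1}$ for all $s',t'\in 2^k$ and $\epsilon\in\{0,1\}$ (the instances coming from extending $s',t'$ by the same bit already follow from level $k$). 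This is done by a finite sub-recursion that repeatedly shrinks $A_0$ and $A_1$: to arrange one instance I use density of $W_{\delta(s',t')+1}$ to choose a clopen sub-rectangle of $h_{s_0,s'}(A_\epsilon)\times h_{s_0,t'}(A_{1-\epsilon})$ inside it and pull back; since the $h_{s_0,\cdot}$ are measure preserving I can keep $A_0,A_1$ of equal measure, and a shrinking step never spoils a previously arranged instance. Finally shrink once more to get $\operatorname{diam}\le 2^{-(k+1)}$, transport to define all $U_{t\epsilon}$, and extend the cocycle.

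The one real obstacle is exactly this tension between the genericity invariant, which wants to move the clopen sets freely into prescribed open sets, and the homomorphism requirement, which imposes a rigid transport structure. What makes the transport cheap enough to survive is that $E_0$-inner homeomorphisms of $2^\N$ are automatically measure preserving (by uniqueness of the $E_0$-invariant probability measure) and that any two equal-measure clopen subsets of $2^\N$ admit an $E_0$-inner homeomorphism between them; maintaining equal measures throughout is therefore both possible and sufficient.
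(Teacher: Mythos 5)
Your argument is correct, but it is a noticeably heavier implementation than the one the paper has in mind. The paper obtains the Fact as the case $n=1$ of the construction of the maps $\alpha_n$ in Section~\ref{subsec: main construction}: write $C\supseteq\bigcap_r D^r$ with $D^r$ dense open and decreasing, and build $f(x)$ by concatenating blocks, appending at stage $r$ one of two finite strings $a^r_0,a^r_1$ (of equal length) according to $x(r)$, these being chosen so that for \emph{all} possible histories $s,t\in 2^r$ and all distinct bits $\xi\neq\zeta$ the pair of cylinders determined by $(\alpha_1(s)\frown a^r_\xi,\alpha_1(t)\frown a^r_\zeta)$ lies inside $D^r$. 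Because at each level all outputs are cylinders of one fixed length, the homomorphism property is automatic (eventually equal inputs get identical, aligned tails), and if $x\not\mathrel{E_0}y$ the pair $(f(x),f(y))$ lies in $D^r$ for infinitely many $r$, hence in $C$. Your Cantor-scheme version reaches the same genericity invariant ($U_s\times U_t\subseteq W_{\delta(s,t)}$), but has to manufacture the homomorphism property by hand, via the $E_0$-inner transport cocycle $h_{s,t}$ and the equal-measure bookkeeping that guarantees the cocycle extends at each stage; your scheme specializes to the paper's construction exactly when every $U_s$ at level $k$ is a cylinder of one fixed length, in which case the transports are initial-segment relabelings and all of that bookkeeping evaporates. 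Two details you leave implicit, neither fatal: the final ``shrink to diameter $\le 2^{-(k+1)}$'' must also make the transported sets $h_{s_0,t}(A_\epsilon)$ small (harmless, by uniform continuity of the finitely many transport maps, or by taking $A_0,A_1$ to be cylinders longer than the depth of the relabelings used so far, which also keeps $f$ well defined); and measure preservation of $E_0$-inner maps is just $E_0$-invariance of the coin-flipping measure --- uniqueness is not needed, and for the concrete relabeling maps you actually use it is evident anyway.
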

This fact is commonly used when building reductions from $E_0$. For example, the map $\alpha_1$ defined in Section~\ref{subsec: main construction}, using the set $D_{0,1} = C \subset (2^\N)^2$ in that construction, satisfies the conclusion in Fact~\ref{fact: E0 respecting splitting}.

To prove Theorem~\ref{thm: E0 dichotomy of homoms}, let $E$ be an analytic equivalence relation which extends $E_0$ and does not have a comeager class. It follows that $C = 2^\N\times 2^\N \setminus E$ is comeager. Then $f$ as above is a reduction of $E_0$ to $E$.

\section{A generic dichotomy for homomorphisms for $E_0^\N$}
Towards the proof of Theorem~\ref{thm: main}, we begin with some technical lemmas.
\subsection{Symmetries of $E_0^\N$}
In this section we prove a lemma regarding Vaught transforms for the action $(\bigoplus_{n\in\N}\mathbb{Z}_2)^\N \curvearrowright (2^\N)^\N$.

Let $X$ and $Y$ be infinite sets and consider the space $(2^X)^Y$ with the product topology. Let $G = (\bigoplus_{x\in X}\mathbb{Z}_2)^Y$, acting on $(2^X)^Y$ in the natural way. We consider $G$ as a topological group with the product topology, where $\bigoplus_{x\in X}\mathbb{Z}_2$ is taken with the discrete topology. Let $\Gamma$ be the subgroup of $G$ of all finite support sequences. That is, $g\in\Gamma$ if $g(y)$ is the identity for all but finitely many $y\in Y$.

Given a subset $Y_0\subset Y$, identify $(2^X)^Y$ with $(2^X)^{Y_0}\times(2^X)^{Y\setminus Y_0}$. For $a\in (2^X)^{Y_0}$ and $D\subset (2^X)^Y$, define $D_{a} = \set{b\in (2^X)^{Y\setminus Y_0}}{(a,b)\in D}\subset (2^X)^{Y\setminus Y_0}$.
\begin{lemma}\label{lemma: permutations}
    Fix a dense open $D\subset (2^X)^Y$ and $\zeta\in (2^X)^Y$. Assume that for any finite $Y_0\subset Y$ and for any $\gamma\in\Gamma$, 
    \begin{equation*}
        D_{\gamma\cdot \zeta\restriction Y_0} \subset (2^X)^{Y\setminus Y_0}
    \end{equation*}
    is not empty. Then 
    \begin{equation*}
        \set{g\in G}{g\cdot \zeta\in D}
    \end{equation*}
    is dense open in $G$.  

    In particular, if $D$ is assumed to be comeager, then we conclude that $\set{g\in G}{g\cdot \zeta\in D}$ is comeager.
\end{lemma}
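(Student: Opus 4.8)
The plan is to show directly that
\[ A \;:=\; \set{g\in G}{g\cdot\zeta\in D} \]
is dense open in $G$, and then read off the comeager statement by intersecting over countably many dense open sets. Openness is immediate: for each $y\in Y$ the $y$-th coordinate of $g\cdot\eta$ depends only on $g(y)$ and $\eta(y)$, so the action $G\times (2^X)^Y\to (2^X)^Y$ is continuous, hence the orbit map $g\mapsto g\cdot\zeta$ is continuous and $A$ is the preimage of the open set $D$.

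For density I would fix a nonempty basic open $U\subset G$ and produce $g\in U$ with $g\cdot\zeta\in D$. Since $G=(\bigoplus_{x\in X}\mathbb{Z}_2)^Y$ carries the product of the discrete topologies, $U$ is determined by prescribing $g(y)$ for $y$ in some finite set $Y_0\subset Y$; let $\gamma\in\Gamma$ take these prescribed values on $Y_0$ and the identity off $Y_0$, so that $U=\set{g\in G}{g\restriction Y_0=\gamma\restriction Y_0}$. Put $a=(\gamma\cdot\zeta)\restriction Y_0\in (2^X)^{Y_0}$. By hypothesis $D_a\subset (2^X)^{Y\setminus Y_0}$ is nonempty, and it is open since $D$ is open and $b\mapsto (a,b)$ is continuous. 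The key point is that the set
\[ O \;:=\; \set{\seqq{h(y)\cdot\zeta(y)}{y\in Y\setminus Y_0}}{h\in(\bigoplus_{x\in X}\mathbb{Z}_2)^{Y\setminus Y_0}} \]
is dense in $(2^X)^{Y\setminus Y_0}$: for each fixed $y$, as $h(y)$ ranges over $\bigoplus_{x\in X}\mathbb{Z}_2$ the point $h(y)\cdot\zeta(y)$ ranges over the set of elements of $2^X$ differing from $\zeta(y)$ in only finitely many coordinates, which is dense in $2^X$; since a basic open subset of the product $(2^X)^{Y\setminus Y_0}$ constrains only finitely many coordinates, this coordinatewise density yields density of $O$. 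Hence $D_a\cap O\neq\emptyset$; choosing $b=\seqq{h(y)\cdot\zeta(y)}{y\in Y\setminus Y_0}$ in this intersection and letting $g\in G$ agree with $\gamma$ on $Y_0$ and with $h$ off $Y_0$, we get $g\in U$ and, under the identification $(2^X)^Y\cong (2^X)^{Y_0}\times (2^X)^{Y\setminus Y_0}$, $g\cdot\zeta=(a,b)\in D$. Thus $g\in A\cap U$, and $A$ is dense.

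For the last clause I would assume only that $D$ is comeager, fix dense open sets $D_n\subset (2^X)^Y$ with $\bigcap_n D_n\subset D$, and reduce to the first part: since $D\subset D_n$, for every finite $Y_0\subset Y$ and every $\gamma\in\Gamma$ we have $D_{(\gamma\cdot\zeta)\restriction Y_0}\subset (D_n)_{(\gamma\cdot\zeta)\restriction Y_0}$, so each $D_n$ inherits the hypothesis of the lemma. By the first part each $\set{g\in G}{g\cdot\zeta\in D_n}$ is dense open, so $\bigcap_n\set{g\in G}{g\cdot\zeta\in D_n}\subset A$ is comeager, hence so is $A$.

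I expect the only step beyond routine to be the density of $O$ — the observation that sweeping the coordinates of $\zeta$ outside a finite set $Y_0$ by arbitrary elements of $\bigoplus_{x\in X}\mathbb{Z}_2$ already reaches a dense subset of $(2^X)^{Y\setminus Y_0}$ — together with the bookkeeping of reducing an arbitrary basic open $U\subset G$ to this situation: one must first translate $\zeta$ by a suitable $\gamma\in\Gamma$ so that the ``nonempty slice'' hypothesis is invoked at $a=(\gamma\cdot\zeta)\restriction Y_0$ rather than merely at $\zeta\restriction Y_0$, which by itself would only give density of $A$ near the identity of $G$. Everything else is continuity of the action and the elementary fact that a countable intersection of dense open sets is comeager.
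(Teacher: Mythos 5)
Your proof of the main (dense open) clause is correct and is essentially the paper's own argument: openness from continuity of $g\mapsto g\cdot\zeta$, and density by prescribing $g$ on a finite $Y_0$ via some $\gamma\in\Gamma$, invoking the nonempty open slice $D_{(\gamma\cdot\zeta)\restriction Y_0}$, and using that the orbit of $\zeta\restriction (Y\setminus Y_0)$ under $(\bigoplus_{x\in X}\mathbb{Z}_2)^{Y\setminus Y_0}$ is dense (your set $O$ is exactly this orbit; the paper quotes its density, you verify it coordinatewise, which is fine).

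The ``in particular'' clause, however, is where your argument has a genuine gap. You fix dense open sets $D_n$ with $\bigcap_n D_n\subset D$ and then write ``since $D\subset D_n$'' -- but that inclusion does not follow from your choice, and open sets satisfying both $D\subset D_n$ and $\bigcap_n D_n\subset D$ exist exactly when $D$ is $G_\delta$. The issue is not cosmetic: for a merely comeager $D$ the nonempty-slice hypothesis does not transfer to the dense open sets witnessing comeagerness, and some extra care is genuinely required. Indeed, already for $X=Y=\N$, let $D$ be the complement of the $G$-orbit of $\zeta$, i.e.\ of $\prod_{y\in Y}[\zeta(y)]_{E_0}$: this $D$ is comeager, and every slice $D_{(\gamma\cdot\zeta)\restriction Y_0}$ is even comeager (it is the complement of $\prod_{y\in Y\setminus Y_0}[\zeta(y)]_{E_0}$), yet $\set{g\in G}{g\cdot\zeta\in D}=\es$. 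So your reduction works only under an additional assumption, e.g.\ that $D$ is a comeager $G_\delta$ set -- then one may take $D=\bigcap_n D_n$ with each $D_n$ open (hence dense, as it contains a comeager set), the inclusion $D\subset D_n$ really holds, and the hypothesis transfers as you say -- or else one should assume the slice condition for each member of a fixed sequence of dense open sets whose intersection lies in $D$. This suffices for the way the lemma is applied later (one may shrink the comeager set $C_k$ to a dense $G_\delta$ set before running the construction), but as written your deduction of the last clause does not go through.
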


\begin{proof}
First, as the map $G\to (2^X)^Y$, $g \mapsto g\cdot\zeta$, is continuous, then $\set{g\in G}{g\cdot \zeta\in D}$ is open as the pre-image of $D$. To show that $\set{g\in G}{g\cdot \zeta\in D}$ is dense, fix a finite set $Y_0\subset Y$ and some $\pi \in (\bigoplus_{x\in X}\mathbb{Z}_2)^{Y_0}$. We need to find some $g\in G$ extending $\pi$ so that $g\cdot \zeta \in D$.

By assumption, $D_{\pi\cdot\zeta \restriction Y_0}\subset (2^X)^{Y\setminus Y_0}$ is a non-empty open set. As all orbits of the action $(\bigoplus_{x\in X}\mathbb{Z}_2)^{Y\setminus Y_0}\curvearrowright (2^X)^{Y\setminus Y_0}$ are dense, we may find some $g\in G$ extending $\pi$ so that $(g\restriction Y\setminus Y_0)\cdot (\zeta \restriction Y\setminus Y_0) \in D_{\pi\cdot\zeta \restriction Y_0}$, and therefore $g\cdot \zeta \in D$.
\end{proof}

\subsection{A reformulation}
For $k\in\N$, we can view $E_0^k$ as an equivalence relation on $(2^\N)^\N$, defined by $x\mathrel{E_0^k}y\iff x\restriction k \mathrel{E_0^k}y\restriction k$. That is, by identifying $E_0^k$ with its pullback via the homomorphism $\pi_k\colon E_0^\N \to_B E_0^k$. We therefore view 
\begin{equation*}
    E_0 \supseteq E_0^2 \supseteq E_0^3 \supseteq \dots \supseteq E_0^\N
\end{equation*}
as a descending sequence of equivalence relations on $(2^\N)^\N$.

We will prove Theorem~\ref{thm: main} in the following equivalent form.
\begin{thm}\label{thm: main reformulated}
Let $E$ be an analytic equivalence relation on $(2^\N)^\N$ which extends $E_0^\N$. Then either
\begin{itemize}
    \item ${E_0^\N}\mathrel{\leq_B}{E}$ or
    \item there is $k\in\N$ so that $E$ extends $E_0^k$ on a comeager set.
\end{itemize}
\end{thm}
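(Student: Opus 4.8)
The plan is as follows. The two alternatives are mutually exclusive, so if $E$ extends $E_0^k$ on a comeager set for some $k$ we are in the second case; assume henceforth that $E$ extends no $E_0^k$ on a comeager set, and recall that $E\supseteq E_0^\N$. Under this hypothesis I will construct a Borel reduction $\alpha$ of $E_0^\N$ to $E$.

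First I would extract a largeness statement from the hypothesis. For $k\in\N$ and $w\in(2^\N)^k$ let $\sim_w$ be the pullback of $E$ along $\eta\mapsto w^\frown\eta$; this is an analytic equivalence relation on $(2^\N)^{\N\setminus k}$ extending $E_0^{\N\setminus k}$, hence, after a suitable identification of index sets, extending $E_0$. Since $E\supseteq E_0^\N$ one has $\sim_w=\sim_{w'}$ whenever $w_i\mathrel{E_0}w'_i$ for all $i<k$, so $W_k:=\set{w\in(2^\N)^k}{\sim_w\text{ has a comeager class}}$ is $E_0^k$-invariant and has the Baire property. A Kuratowski--Ulam computation gives that $E$ extends $E_0^k$ on a comeager set if and only if $W_k$ is comeager; since the action $(\bigoplus_n\mathbb{Z}_2)^k\curvearrowright(2^\N)^k$ has dense orbits, $W_k$ is meager or comeager, so under our hypothesis it is meager. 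Thus for comeager many $w$ the relation $\sim_w$ has no comeager class; by the $E_0$-dichotomy (Theorem~\ref{thm: E0 dichotomy of homoms}) this gives $E_0\leq_B\sim_w$, and since the $\sim_w$-classes are $E_0^{\N\setminus k}$-invariant, the same dense-orbit argument shows every $\sim_w$-class is meager. So for each $k$ there is a comeager $E_0^k$-invariant $V_k\subseteq(2^\N)^k$ such that $E_0\leq_B\sim_w$ and all $\sim_w$-classes are meager whenever $w\in V_k$. Put $C^{*}:=\set{\zeta\in(2^\N)^\N}{\zeta\restriction k\in V_k\text{ for every }k}$, a comeager set. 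A further Kuratowski--Ulam argument then shows that for each $n$ the set $D_n$ of triples $(\zeta,\eta,\eta')\in C^{*}\times((2^\N)^{\N\setminus n})^2$ with $\neg\bigl((\zeta\restriction n)^\frown\eta\mathrel{E}(\zeta\restriction n)^\frown\eta'\bigr)$ is comeager.

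Next I would build $\alpha\colon(2^\N)^\N\to(2^\N)^\N$ as a limit of finite approximations, by a Cohen-style fusion, arranging: (i) $\alpha\colon E_0^\N\to_B E_0^\N$, built locally so that $\alpha(x)\restriction n$ depends only on $x\restriction n$ up to $E_0^n$; (ii) the range of $\alpha$ is contained in $C^{*}$; (iii) whenever $n$ is least with $\neg(x_n\mathrel{E_0}y_n)$, the triple $(\alpha(x)\restriction n,\ \alpha(x)\restriction[n,\infty),\ \alpha(y)\restriction[n,\infty))$ lies in $D_n$. Requirement (iii) would be met by the standard device from the construction of reductions from $E_0$: fix a decreasing sequence of dense open sets whose intersection is a dense $G_\delta$ inside $D_n$, and at the (infinitely many) bit positions where $x_n$ and $y_n$ disagree, push the current approximations to $\alpha(x)$ and $\alpha(y)$ into the next term of this sequence. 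The room needed to do this while keeping (i) and (ii) --- that inside the $E_0^\N$-orbit one is currently targeting one can still move into a prescribed dense open set --- is exactly what Lemma~\ref{lemma: permutations} provides; this is the $E_0^\N$-analogue of Fact~\ref{fact: E0 respecting splitting}, with the sets $D_n$ playing the role of the comeager set there. Granting such an $\alpha$, the verification is short: if $x\mathrel{E_0^\N}y$ then $\alpha(x)\mathrel{E_0^\N}\alpha(y)$, hence $\alpha(x)\mathrel{E}\alpha(y)$ since $E\supseteq E_0^\N$; and if $n$ is least with $\neg(x_n\mathrel{E_0}y_n)$ then, with $w:=\alpha(x)\restriction n=^{*}\alpha(y)\restriction n$ by (i), we have $\sim_w=\sim_{\alpha(y)\restriction n}$, and moving the first $n$ coordinates (legal since $E\supseteq E_0^\N$) reduces $\alpha(x)\mathrel{E}\alpha(y)$ to $\alpha(x)\restriction[n,\infty)\mathrel{\sim_w}\alpha(y)\restriction[n,\infty)$, which (iii) excludes. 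Hence $\alpha$ is a reduction.

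The step I expect to be the main obstacle is this construction: running the fusion so that (i), (ii) and (iii) hold simultaneously. The homomorphism-plus-locality requirement forces a rigid block structure on $\alpha$, and the delicate point is checking that, inside whatever $E_0^\N$-orbit one is currently aiming at, there remains enough freedom to also enter the next dense open set approximating a given $D_n$; this is precisely what Lemma~\ref{lemma: permutations} is designed for. By contrast the largeness extraction of the second paragraph --- where the hypothesis and the $E_0$-dichotomy are actually consumed --- is conceptually the heart of the matter but comparatively routine.
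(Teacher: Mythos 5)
Your first and last steps are fine: the extraction in your second paragraph is essentially the paper's Lemma~\ref{lemma: generics Einequiv} (which the paper gets more directly from the dense-orbit argument, without needing Theorem~\ref{thm: E0 dichotomy of homoms} or the reduction $E_0\leq_B\,\sim_w$, which you never use), and your verification is correct \emph{granted} (i)--(iii). The genuine gap is requirement (iii) itself, together with the claim that Lemma~\ref{lemma: permutations} supplies the ``room'' to fuse the actual triple into a dense $G_\delta$ inside $D_n$. Lemma~\ref{lemma: permutations} says nothing about freedom during a fusion: it is a statement about a \emph{fixed} point $\zeta$, asserting that the set of group elements moving $\zeta$ into a dense open set is dense open, provided all finite-coordinate translates of $\zeta$ have nonempty sections. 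It cannot be used to steer the approximations of $\alpha$.

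Moreover, target (iii) is in general unreachable for any construction with the locality you impose in (i). Locality forces the block appended to column $m$ at a given row to depend only on the current row value, not on the history; hence for a pair $x,y$ whose least disagreement is in column $n$, the images $\alpha(x)(m)$ and $\alpha(y)(m)$ agree on every block contributed by a row $r$ with $x(n)(r)=y(n)(r)$. For instance, if $x(n)(0)=y(n)(0)$, then $\alpha(x)(m)$ and $\alpha(y)(m)$ share their first block for \emph{every} $m$, so a dense open subset of the infinite product demanding a disagreement there can never be entered; more generally, a dense open set constrains columns far beyond those active at the (uncontrollable) rows where you are allowed to differentiate the two images, and this is unlike the two-coordinate situation of Fact~\ref{fact: E0 respecting splitting}, where every coordinate is extended at every stage. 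This is exactly the obstruction the paper's proof is built around: it never tries to put the triple into $C_k$ (your $D_n$) itself. Instead it defines the sets $D_{k,m}$ on \emph{finitely many} columns, by the Vaught-transform condition that every finite-support translate has comeager fiber over the remaining columns, arranges at the splitting rows that the image triple lies in every $D_{k,m}$ (Claim~\ref{claim: main construction genericity}) --- feasible precisely because only currently active columns are constrained --- and only then, in the verification, applies Lemma~\ref{lemma: permutations} to produce a finite-support group element moving the triple into $C_k$, concluding $f(x)\not\mathrel{E}f(y)$ from the invariance of $E$ under the $(\bigoplus_n\mathbb{Z}_2)^\N$-action (which holds since $E\supseteq E_0^\N$). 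This change of target, plus the final translation-plus-invariance step, is the missing idea; without it your fusion plan stalls at exactly the point you flagged as the main obstacle.
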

\begin{proof}[Proof of Theorem~\ref{thm: main} from Theorem~\ref{thm: main reformulated}]
Let $E$ be an analytic equivalence relation and $f\colon E_0^\N\to_B E$ a Borel homomorphism, and assume that $E_0^\N$ is not Borel reducible to $E$. Let $E^\ast$ on $(2^\N)^\N$ be the pullback of $E$. Then $E^\ast$ extends $E_0^\N$, and $E_0^\N$ is not Borel reducible to  $E^\ast$. By Theorem~\ref{thm: main reformulated} there is some $k\in\N$ and a comeager $C\subset (2^\N)^\N$ so that for $x,y\in C$,
\begin{equation*}
    x\restriction k \mathrel{E_0^k} y\restriction k \implies x\mathrel{E^\ast}y.
\end{equation*}
Let $C_k$ be the set of all $\xi\in(2^\N)^k$ so that the fiber $C_\xi\subset (2^\N)^{\N\setminus k}$ is comeager. Fix a Borel map $g\colon C_k \to C$ so that $g(\xi)\restriction k = \xi$. Then $g$ is a homomorphism from $E_0^k$ to $E^\ast$, and therefore $h = f\circ g$ is a homomorphism from $E_0^k$ to $E$, defined on a comeager set. Now for any $x\in C$, $h\circ \pi_k(x) \mathrel{E} f(x)$, as required.
\end{proof}

Towards the proof of Theorem~\ref{thm: main reformulated}, fix an analytic equivalence relation $E$ on $(2^\N)^\N$ which extends $E_0^\N$. Assume that for every $k\in\N$, $E$ does not extend $E_0^k$ on a comeager set. We need to prove that $E_0^\N$ is Borel reducible to $E$.

\begin{lemma}\label{lemma: generics Einequiv}
For every $k$, there is a comeager set $C_k\subset (2^\N)^k \times (2^\N)^{\N\setminus k} \times (2^\N)^{\N\setminus k}$ so that $(x,y)\not\mathrel{E}(x,z)$ for any $(x,y,z)\in C_k$.
\end{lemma}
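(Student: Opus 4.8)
The plan is to deduce the lemma from the single claim that, for each $k\in\N$, the set
$R_k := \{(x,y,z)\in (2^\N)^k\times(2^\N)^{\N\setminus k}\times(2^\N)^{\N\setminus k} : (x,y)\mathrel{E}(x,z)\}$
is meager, where $(x,y)$ and $(x,z)$ are regarded as elements of $(2^\N)^\N$ in the obvious way; then $C_k$ can be taken to be the complement of $R_k$. Since $E$ is analytic, $R_k$ is analytic, being the preimage of $E$ under a continuous map, and in particular $R_k$ has the Baire property.

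I would first observe that $R_k$ is invariant under the coordinatewise action by homeomorphisms of the group $H := (\bigoplus_{n}\mathbb{Z}_2)^k \times (\bigoplus_{n}\mathbb{Z}_2)^{\N\setminus k}\times(\bigoplus_{n}\mathbb{Z}_2)^{\N\setminus k}$ on $Z := (2^\N)^k\times(2^\N)^{\N\setminus k}\times(2^\N)^{\N\setminus k}$. Indeed, if $(x,y)\mathrel{E}(x,z)$ and $(g,h_1,h_2)\in H$, then $(g\cdot x, h_1\cdot y)\mathrel{E_0^\N}(x,y)$ and $(g\cdot x, h_2\cdot z)\mathrel{E_0^\N}(x,z)$, so since $E$ extends $E_0^\N$ and is transitive, $(g\cdot x, h_1\cdot y)\mathrel{E}(g\cdot x, h_2\cdot z)$. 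Every $H$-orbit is a product of $E_0$-classes, hence dense, so $H$ acts topologically transitively on the Polish space $Z$; by the topological zero-one law, the $H$-invariant, Baire-property set $R_k$ is either meager or comeager. Hence it suffices to rule out that $R_k$ is comeager.

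So suppose, toward a contradiction, that $R_k$ is comeager. Applying the Kuratowski-Ulam theorem to the splitting $\big((2^\N)^k\times(2^\N)^{\N\setminus k}\big)\times(2^\N)^{\N\setminus k}$ of $Z$, the set
$C := \{(x,u)\in (2^\N)^\N : \{z\in(2^\N)^{\N\setminus k} : (x,u)\mathrel{E}(x,z)\}\ \text{is comeager}\}$
is comeager in $(2^\N)^\N$. I claim that for all $w,w'\in C$ with $w\mathrel{E_0^k}w'$ one has $w\mathrel{E}w'$; this contradicts the standing assumption that $E$ does not extend $E_0^k$ on a comeager set, which shows $R_k$ is meager and finishes the proof. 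For the claim, write $w=(x,u)$ and $w'=(x',v)$, so $x\mathrel{E_0^k}x'$. The comeager sets $\{t\in(2^\N)^{\N\setminus k} : (x,u)\mathrel{E}(x,t)\}$ and $\{t\in(2^\N)^{\N\setminus k} : (x',v)\mathrel{E}(x',t)\}$ intersect in some $t$; then $(x,u)\mathrel{E}(x,t)$, while $(x,t)\mathrel{E_0^\N}(x',t)$ (since $x\mathrel{E_0^k}x'$) gives $(x,t)\mathrel{E}(x',t)$, and $(x',t)\mathrel{E}(x',v)$; chaining these yields $(x,u)\mathrel{E}(x',v)$.

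The step I expect to be the main obstacle is the one in the previous paragraph: transferring the statement ``$R_k$ is comeager in the triple product $Z$'' into the statement ``$E$ extends $E_0^k$ on a comeager subset of $(2^\N)^\N$'', since the two live on different spaces. What makes it work is that membership of $(x,u)$ in $C$ depends on $x$ only through its $E_0^k$-class and is compatible with the $E_0^\N$-invariance of $E$, so the comeager set $C$ produced by Kuratowski-Ulam can be chained across the $\pi_k$-fibers. The remaining points are routine: analytic sets have the Baire property, and the topological zero-one law applies to any group acting by homeomorphisms with dense orbits on a Polish (hence Baire) space.
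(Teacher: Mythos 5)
Your proof is correct and follows essentially the same route as the paper: upgrade the analytic, $H$-invariant set $R_k$ from non-meager to comeager using dense orbits (topological zero-one law), then apply Kuratowski--Ulam and chain through a common point of two comeager fibers to contradict the standing assumption that $E$ does not extend $E_0^k$ on a comeager set. You merely spell out the invariance and chaining steps that the paper leaves implicit.
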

\begin{proof}
    Otherwise, since the actions $(\bigoplus_{n\in\N}\mathbb{Z}_2)^k\curvearrowright (2^\N)^k$ and $(\bigoplus_{n\in\N}\mathbb{Z}_2)^{\N\setminus k}\curvearrowright (2^\N)^{\N\setminus k}$ have dense orbits, we would get a comeager set $C\subset (2^\N)^k \times (2^\N)^{\N\setminus k} \times (2^\N)^{\N\setminus k}$, which we may assume is invariant, so that $(x,y)\mathrel{E}(x,z)$ for all $(x,y,z)\in C$. Now $E$ extends $E_0^k$ on the comeager set of all $(x,y)\in (2^\N)^k \times (2^\N)^{\N\setminus k}$ for which $\set{z\in (2^\N)^{\N\setminus k}}{(x,y,z)\in C}$ is comeager, contradicting our assumption.
\end{proof}
We identify each $C_k$ as a subset of
\begin{equation*}
C_k\subset (2^\N)^k \times (2^\N)^{2\times (\N\setminus k)}.    
\end{equation*}
For each $k\in\N$, $m\geq k$, and $\gamma\in (\bigoplus_{n\in\N}\mathbb{Z}_2)^{k}\times (\bigoplus_{n\in\N}\mathbb{Z}_2)^{2\times (m\setminus k)}$, consider the set
\begin{equation*}
\begin{split}
    \{ & (\eta_0,\dots,\eta_{k-1},\xi_k,\zeta_k,\dots,\xi_{m-1},\zeta_{m-1}) \in (2^\N)^k \times (2^\N)^{2\times (m\setminus k)} \colon  \\ & (C_k)_{\gamma\cdot (\eta_0,\dots,\eta_{k-1},\xi_k,\zeta_k,\dots,\xi_{m-1},\zeta_{m-1})}\subset (2^\N)^{2\times \N\setminus m}\textrm{ is comeager} \},
\end{split}    
\end{equation*}
a comeager subset of $(2^\N)^k \times (2^\N)^{2\times (m\setminus k)}$. Let 
\begin{equation*}
D_{k,m}\subset (2^\N)^k \times (2^\N)^{2\times (m\setminus k)}    
\end{equation*}
be the intersection of all (countably many) such sets. Write
\begin{equation*}
    D_{k,m} = \bigcap_{l\in\N} D_{k,m}^l,
\end{equation*}
an intersection of dense open sets. We may assume that, for $k < m < l < h$,
\begin{equation*}
    D_{k,m}^{l} \times (2^\N)^{2\times (l\setminus m)} \supseteq D_{k,l}^l \supseteq D_{k,l}^{h}.
\end{equation*}
We will identify members of $(2^{<\N})^k \times (2^{<\N})^{2\times (m\setminus k)}$ with the basic open subsets of $(2^\N)^k \times (2^\N)^{2\times (m\setminus k)}$ which they define.

\subsection{A construction}\label{subsec: main construction}
We want to find a Borel homomorphism $f\colon E_0^\N \to_B E_0^\N$ which is a reduction from $E_0^\N$ to $E$. Roughly speaking, we hope to construct such $f$ so that, given $x\not\mathrel{E_0^\N}y$, then $f(x)$ and $f(y)$ can be written as $(a,b), (a,c) \in (2^\N)^k\times (2^\N)^{\N\setminus k}$ respectively, so that $(a,b,c)\in C_k$. Instead we will ensure that $(a,b,c)$ satisfies the assumptions in Lemma~\ref{lemma: permutations}, with respect to the comeager set $C_k$.

We will construct $f$ so that $f(x)(n)$ will depend on $x\restriction n+1$. We view $x\restriction n \in (2^\N)^{n}$ which we identify as $(2^n)^\N$. The equivalence relation $E_0^n$, identified on $(2^n)^\N$, is still ``eventual equality'', between sequences of members of $2^n$. The point here is that given $x,y\in (2^\N)^\N$, $k,l\in\N$, for which $x(k)(l)\neq y(k)(l)$, then $(x\restriction n)(k)(l) \neq (y\restriction n)(k)(l)$ for all $n\geq k+1$, where $(x\restriction n)(k)(l)\in 2^n$.

First, we define maps $\alpha_n\colon (2^n)^\N \to 2^\N$ as follows. We define recursively maps $\alpha_n \colon (2^n)^r \to 2^{<\N}$ which cohere, that is, for $r_1 < r_2$, $t_1\in (2^n)^{r_1}$, and $t_2\in (2^n)^{r_2}$ extending $t_1$, $\alpha_n(t_2)$ extends $\alpha_n(t_1)$. Then $\alpha_n\colon (2^n)^\N \to 2^\N$ will be defined as the limit.

At stage $r$, assume that we have defined
\begin{equation*}
    \alpha_n \colon (2^n)^r \to 2^{<\N},\, n\leq r.
\end{equation*}
For each $k<m\leq r$, since $D_{k,r+1}^{r+1} \subset (2^\N)^k \times (2^\N)^{2\times(r+1\setminus k)}$ is dense open, any member of $(2^{<\N})^k \times (2^{<\N})^{2\times (r+1\setminus k)}$ can be extended to define a subset of $D_{k,r+1}^{r+1}$. By extending finitely many times, we may find
\begin{equation*}
    a^n_{\xi}\in 2^{<\N}, \textrm{ for }\xi\in 2^n, n\leq r+1,
\end{equation*}
so that for any $k < r+1$, for any
\begin{equation*}
    \seqq{t_n \in (2^n)^r}{n<k}, \seqq{t_n,s_n \in (2^n)^r}{k\leq n \leq r}, \textrm{ and any}
\end{equation*}
\begin{equation*}
    \seqq{\xi_n\in 2^n}{n<k}, \seqq{\xi_n\neq \zeta_n \in 2^n}{k\leq n \leq r+1},
\end{equation*}
\begin{equation*}
    \left(\seqq{\alpha_n(t_n)\frown a^n_{\xi_n}}{n<k},\seqq{\alpha_n(t_n)\frown a^n_{\xi_n}, \alpha_n(s_n)\frown a^n_{\zeta_n}}{k\leq n \leq r},(a^{r+1}_{\xi_{r+1}},a^{r+1}_{\zeta_{r+1}})\right) \in D^{r+1}_{k,r+1}.
\end{equation*}
This concludes the definition of the maps $\alpha_n$, $n\in\N$. Note that for each $n\in\N$, $\alpha_n \colon E_0^n \to_B E_0$ is a Borel homomorphism.
\begin{remark}
    We may assume that $D_{0,n} \subset (2^\N)^{2\times n} \setminus E_0^n$, and so $\alpha_n$ is a reduction of $E_0^n$ to $E_0$.
\end{remark}
\begin{claim}\label{claim: main construction genericity}
    Suppose $x,y\in (2^\N)^\N$ are such that $x\restriction k = y \restriction k$ and $x(k)\not\mathrel{E_0}y(k)$. Then for any $k<m$,
    \begin{equation*}
        \left(\seqq{\alpha_n(x\restriction n+1)}{n<k}, \seqq{\alpha_n(x\restriction n+1),\alpha_n(y\restriction n+1)}{k\leq n <m}\right) \in D_{k,m}
    \end{equation*}
\end{claim}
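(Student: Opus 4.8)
The plan is to unwind the definition $D_{k,m}=\bigcap_{l} D_{k,m}^{l}$ and verify membership in each dense open factor $D_{k,m}^{l}$ separately. Since $D_{k,m}^{l}$ is open, it suffices for each $l$ to exhibit a basic open neighbourhood of the point
\[
p := \big(\seqq{\alpha_n(x\restriction n+1)}{n<k},\ \seqq{\alpha_n(x\restriction n+1),\,\alpha_n(y\restriction n+1)}{k\leq n<m}\big)
\]
that is already known to lie inside $D_{k,m}^{l}$. Such neighbourhoods are produced by the construction itself: the finite strings $\alpha_n((x\restriction n+1)\restriction r)$, and the analogous ones built from $y$, are initial segments of the corresponding coordinates of $p$, and the choice of the strings $a^n_\xi$ at stage $r-1$ was arranged precisely so that the resulting tuple of finite strings defines a basic open subset of $D^{r}_{k,r}$ — provided the relevant symbols differ where the genericity clause requires.

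To carry this out I would first fix $l$, assume $l\geq m$, and then — this is the one place the hypothesis of the claim enters — use $x(k)\not\mathrel{E_0}y(k)$ to pick $r>l$ with $x(k)(r-1)\neq y(k)(r-1)$. The point of such an $r$ is that, since $x\restriction k=y\restriction k$, the sequences $x\restriction n+1$ and $y\restriction n+1$ differ exactly at the coordinates where $x(k)$ and $y(k)$ differ; hence for every $n$ with $k\leq n<m$ the "new symbols" of $x\restriction n+1$ and of $y\restriction n+1$ read off at position $r-1$ are distinct, while for $n<k$ one simply has $x\restriction n+1=y\restriction n+1$ so only a single string is needed. This is exactly the hypothesis under which the genericity property of the $\alpha_n$'s from Section~\ref{subsec: main construction} fires, so the tuple
\[
\big(\seqq{\alpha_n((x\restriction n+1)\restriction r)}{n<k},\ \seqq{\alpha_n((x\restriction n+1)\restriction r),\,\alpha_n((y\restriction n+1)\restriction r)}{k\leq n\leq r-1},\ (a^{r}_{\xi},a^{r}_{\zeta})\big)
\]
defines a basic open subset of $D^{r}_{k,r}$, for any admissible choice of the final auxiliary pair.

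Finally I would cut this down to the coordinates indexed by $m$ and pass from $D^{r}_{k,r}$ to $D^{l}_{k,m}$ using the coherence inequalities imposed on the $D^{\bullet}_{\bullet,\bullet}$, namely $D^{l'}_{k,m'}\times(2^{\N})^{2\times(l'\setminus m')}\supseteq D^{l'}_{k,l'}\supseteq D^{h}_{k,l'}$ for $k<m'<l'<h$, which say precisely that projecting a subset of $D^{r}_{k,r}$ onto the $m$-block lands inside $D^{l}_{k,m}$ (the auxiliary pair is discarded under this projection, which is why its value was irrelevant). Then $p$ lies in this basic open subset of $D^{l}_{k,m}$, hence in $D^{l}_{k,m}$; as $l$ was arbitrary, $p\in D_{k,m}$. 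I expect the only real content to be the observation that the single inequivalence $x(k)\not\mathrel{E_0}y(k)$, together with $x\restriction k=y\restriction k$, forces all of the restrictions $x\restriction n+1$ with $k\leq n<m$ to split at arbitrarily large common positions, which is what lets one invoke the genericity built into the $\alpha_n$'s at a single stage $r-1$; the main (modest) obstacle is then just the index bookkeeping and keeping the coherence inclusions for the $D^{l}_{k,m}$ lined up correctly.
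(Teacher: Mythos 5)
Your proposal is correct and takes essentially the same route as the paper's proof: use $x(k)\not\mathrel{E_0}y(k)$ to obtain arbitrarily large stages $r$ at which the construction's genericity clause fires (since $x\restriction k=y\restriction k$ the coordinates below $k$ agree, while for $k\leq n$ the new symbols $(x\restriction n+1)(r-1)\neq(y\restriction n+1)(r-1)$), giving a basic open neighbourhood inside $D^{r}_{k,r}$, and then project down to $D^{l}_{k,m}$ via the coherence inclusions and the decreasingness of the superscripts. One small imprecision, harmless for the argument: $x\restriction n+1$ and $y\restriction n+1$ differ \emph{at least} at the positions where $x(k)$ and $y(k)$ differ (not exactly there), which is all that is needed.
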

\begin{proof}
    It suffices to prove membership in $D_{k,m}^r$ for infinitely many $r\in\N$, since $D^r_{k,m}$ is decreasing in $r$. For each $r$ so that $x(k)(r)\neq y(k)(r)$, we have that $(x\restriction n+1) (r) \neq (y\restriction n+1) (r)$, as members of $2^{n+1}$, for all $k\leq n$. Therefore at stage $r$ of the construction we ensure that 
    \begin{equation*}
        \left(\seqq{\alpha_n(x\restriction n+1)}{n<k}, \seqq{\alpha_n(x\restriction n+1),\alpha_n(y\restriction n+1)}{k\leq n <m}\right)\in D^r_{k,m},
    \end{equation*}
since $D_{k,m}^{r} \times (2^\N)^{2\times (r\setminus m)} \supseteq D_{k,r}^r$.
\end{proof}
Finally, define $f\colon (2^\N)^\N \to (2^\N)^\N$ by
\begin{equation*}
    f(x)(n) = \alpha_n(x\restriction n+1).
\end{equation*}
Then $f\colon E_0^\N \to_B E_0^\N$ is Borel homomorphism. To conclude the proof of the main theorem, we prove that $f$ is a reduction of $E_0^\N$ to $E$.
\subsection{Concluding the proof}
Since $E$ extends $E_0^\N$, it remains to prove that if $x\not\mathrel{E_0^\N}y$ then $f(x)\not\mathrel{E}f(y)$. Since $f\colon E_0^\N \to_B E$ is a homomorphism, it suffices to prove the following.
\begin{claim}
    Suppose $x,y\in (2^\N)^\N$, $x\restriction k = y\restriction k$ and $x(k)\not\mathrel{E_0}y(k)$. Then $f(x)\not\mathrel{E}f(y)$.
\end{claim}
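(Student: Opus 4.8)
The plan is to fix $x, y$ with $x \restriction k = y \restriction k$ and $x(k) \not\mathrel{E_0} y(k)$, and assume toward a contradiction that $f(x) \mathrel{E} f(y)$. First I would split the coordinates of $(2^\N)^\N$ as $k \sqcup (\N \setminus k)$, and write $f(x) = (a, b)$ and $f(y) = (a, c)$ in $(2^\N)^k \times (2^\N)^{\N\setminus k}$; these share the first $k$ coordinates because $f(x)(n) = \alpha_n(x\restriction n+1) = \alpha_n(y \restriction n+1) = f(y)(n)$ for $n < k$, using $x \restriction k = y \restriction k$. So the assumption $f(x)\mathrel{E}f(y)$ says $(a,b) \mathrel{E} (a,c)$ with $b \not\mathrel{E_0^{\N\setminus k}} c$ (the latter because $x(k)\not\mathrel{E_0}y(k)$ feeds into coordinate $k$ of the output via $\alpha_k$, which we arranged to be a reduction). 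The goal is to derive a contradiction with Lemma~\ref{lemma: generics Einequiv}, i.e. with the comeager set $C_k$ on which $(a', b') \not\mathrel{E} (a', c')$.

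The heart of the argument is to combine Claim~\ref{claim: main construction genericity} with Lemma~\ref{lemma: permutations}. By Claim~\ref{claim: main construction genericity}, for every $m > k$ the tuple $\bigl(\langle \alpha_n(x\restriction n+1)\rangle_{n<k}, \langle \alpha_n(x\restriction n+1), \alpha_n(y\restriction n+1)\rangle_{k\le n <m}\bigr)$ lies in $D_{k,m}$, hence its restriction to any finite $Y_0$ has comeager $C_k$-fiber; this is exactly the hypothesis needed to invoke Lemma~\ref{lemma: permutations} with $\zeta$ the full sequence $\bigl(\langle \alpha_n(x\restriction n+1)\rangle_{n<k}, \langle \alpha_n(x\restriction n+1), \alpha_n(y\restriction n+1)\rangle_{k\le n}\bigr) = (a, b, c)$ living in $(2^\N)^k \times (2^\N)^{2\times(\N\setminus k)}$. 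One subtlety to check is that the $D_{k,m}$-membership for all finite $m$, together with the coherence conditions $D_{k,m}^l \times (2^\N)^{2\times(l\setminus m)} \supseteq D_{k,l}^l \supseteq D_{k,l}^h$ we imposed on the defining dense open sets, genuinely gives the fiber-comeagerness hypothesis of Lemma~\ref{lemma: permutations} for the set $C_k$ (and each $\gamma \in \Gamma$), not merely for the $D$'s; this should follow because $D_{k,m}$ was defined precisely as the set of tuples whose $\Gamma$-translated $C_k$-fibers are comeager. Applying the ``in particular'' clause of Lemma~\ref{lemma: permutations}, the set $\{g \in G : g \cdot (a,b,c) \in C_k\}$ is comeager in $G = (\bigoplus_n \Z_2)^k \times (\bigoplus_n \Z_2)^{2\times(\N\setminus k)}$, so in particular there is at least one $g = (g_0, g_1, g_2)$ with $g \cdot (a,b,c) \in C_k$, i.e. $(g_0 \cdot a, g_1 \cdot b) \not\mathrel{E} (g_0 \cdot a, g_2 \cdot c)$.

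Finally I would close the loop: $E$ extends $E_0^\N$, and $g_0 \cdot a \mathrel{E_0^k} a$, $g_1 \cdot b \mathrel{E_0^{\N\setminus k}} b$, $g_2 \cdot c \mathrel{E_0^{\N\setminus k}} c$, since each $g_i$ acts by finitely-supported (indeed arbitrary, but $E_0$-respecting coordinatewise) bit-flips—more carefully, elements of $\bigoplus_n \Z_2$ in each coordinate move a point within its $E_0$-class, so elements of $G$ move $(a,b,c)$ within its $E_0^k \times E_0^{\N\setminus k} \times E_0^{\N\setminus k}$-class. Hence $(g_0\cdot a, g_1 \cdot b) \mathrel{E_0^\N} (a,b) = f(x)$ and $(g_0 \cdot a, g_2 \cdot c) \mathrel{E_0^\N} (a,c) = f(y)$, so by $E_0^\N \subseteq E$ and transitivity, $(g_0\cdot a, g_1\cdot b) \mathrel{E} f(x) \mathrel{E} f(y) \mathrel{E} (g_0 \cdot a, g_2 \cdot c)$, contradicting $(g_0\cdot a, g_1\cdot b)\not\mathrel{E}(g_0\cdot a, g_2\cdot c)$. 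The main obstacle I anticipate is the bookkeeping in the previous paragraph: verifying that the particular nested structure of the $D_{k,m}^l$'s chosen during the construction really does deliver the precise hypothesis of Lemma~\ref{lemma: permutations} uniformly in $m$, and correctly identifying the limit object $(a,b,c)$ with $\zeta$ in that lemma; the rest is a routine chase through ``$E_0^\N \subseteq E$ and the translates stay in the class.''
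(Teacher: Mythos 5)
Your proposal is correct and follows essentially the same route as the paper: write $f(x)=(a,b)$, $f(y)=(a,c)$, use Claim~\ref{claim: main construction genericity} together with the definition of the sets $D_{k,m}$ to verify the hypothesis of Lemma~\ref{lemma: permutations} for $(a,b,c)$ with respect to $C_k$, obtain a group element moving $(a,b,c)$ into $C_k$, and conclude via the $E_0^\N$-invariance of $E$. The only cosmetic difference is that you argue by contradiction where the paper argues directly, and the subtlety you flag (comeager fibers of $C_k$ encoded in $D_{k,m}$ yielding the lemma's hypothesis) is resolved exactly as you indicate.
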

\begin{proof}
By the definition of $f$, Claim~\ref{claim: main construction genericity}, and the choice of the sets $D_{k,m}$, we may write $f(x) = (a,b)$ and $f(y) = (a,c)$ where $a\in (2^\N)^k$, $b,c \in (2^\N)^{\N\setminus k}$, so that the triplet $(a,b,c)$ satisfies the conditions in Lemma~\ref{lemma: permutations} with respect to the comeager set $C_k\subset (2^\N)^Y$, where $Y = k \sqcup (\N\setminus k) \sqcup (\N\setminus k)$. It follows from Lemma~\ref{lemma: permutations} that there is some 
\begin{equation*}
(g,h_1,h_2) \in (\bigoplus_{n\in\N}\mathbb{Z}_2)^k \times (\bigoplus_{n\in\N}\mathbb{Z}_2)^{\N\setminus k}\times (\bigoplus_{n\in\N}\mathbb{Z}_2)^{\N\setminus k}    
\end{equation*}
so that
\begin{equation*}
    (g\cdot a, h_1\cdot b, h_2\cdot c)\in C_k,
\end{equation*} 
and so 
\begin{equation*}
    (g\cdot a, h_1\cdot b) \not\mathrel{E} (g\cdot a, h_2\cdot b).
\end{equation*}
Since $E$ extends $E_0^\N$, it is invariant under the action, and so 
\begin{equation*}
    f(x) = (a,b)\not\mathrel{E}(a,c) = f(y),
\end{equation*}
as required.
\end{proof}

\subsection{Complexity on comeager sets}\label{subsec: E3 retains complexity}
In the proof of Theorem~\ref{thm: E3 prime} we used the fact that for any comeager $C\subset (2^\N)^{\N}$, $E_0^\N \leq_B E_0^\N\restriction C$. We sketch a proof of this using the construction above.

Let $C\subset (2^\N)^\N$ be comeager. Similar to the above, define $D_{m}\subset (2^\N)^m$ as the intersection of all sets of the form 
\begin{equation*}
\set{\seq{\eta_0,\dots,\eta_{m-1}}\in (2^\N)^m}{C_{\gamma\cdot\seq{\eta_0,\dots,\eta_{m-1}}}\subset (2^\N)^{\N\setminus m} \textrm{ is comeager}},
\end{equation*}
for $\gamma \in (\bigoplus_{n\in\N}\mathbb{Z}_2)^m$. Define $f\colon E_0^\N \to_B E_0^\N$ as above, so that for any $x\in (2^\N)^\N$ and any $m\in\N$, $f(x)\restriction m \in D_m$. It follows from Lemma~\ref{lemma: permutations} that $\set{g\in (\bigoplus_{n\in\N}\mathbb{Z}_2)^\N}{g\cdot f(x)\in C}$ is comeager. It follows from \cite[Theorem~18.6]{Kechris-DST-1995} that there is a Borel map $h\colon (2^\N)^\N \to (\bigoplus_{n\in\N}\mathbb{Z}_2)^\N$ so that $h(x)\cdot f(x)\in C$ for all $x\in (2^\N)^\N$. We conclude that $x\mapsto h(x)\cdot f(x)$ is a Borel reduction of $E_0^\N$ to $E_0^\N\restriction C$.

\bibliographystyle{alpha}
\bibliography{bibliography}

\end{document}